\theoremstyle{definition}
\newtheorem{definition}{Definition}[section]
\theoremstyle{plain}
\newtheorem{lemma}[definition]{Lemma}
\newtheorem{proposition}[definition]{Proposition}
\newtheorem{problem}[definition]{Problem}
\theoremstyle{remark}
\newtheorem*{remark}{Remark}
\newcommand{\E}{\mathsf{E}}
\newcommand{\D}{\mathsf{D}}
\newcommand{\Var}{\mathsf{Var}}
\newcommand{\bm}[1]{\boldsymbol{#1}}
\DeclareMathOperator{\supp}{supp}
\DeclareMathOperator{\Frob}{Frob}
\DeclareMathOperator{\Aut}{Aut}
\title[Subfield Subcodes of Hermitian Codes]{Estimating The Dimension Of The Subfield Subcodes of Hermitian Codes}
\author{Sabira El Khalfaoui}
\address{Bolyai Institute \\
	University of Szeged \\
	Aradi v\'ertan\'uk tere 1\\
	H-6720 Szeged, Hungary}
\email{sabira@math.u-szeged.hu}
\author{G\'abor P. Nagy}
\address{Department of Algebra \\
	Budapest University of Technology and Economics\\
	Egry J\'ozsef utca 1\\
	H-1111 Budapest, Hungary}
\address{Bolyai Institute \\
	University of Szeged \\
	Aradi v\'ertan\'uk tere 1\\
	H-6720 Szeged, Hungary}
\email{nagyg@math.u-szeged.hu}
\keywords{AG code, Hermitian code, subfield subcode, extreme value distribution}
\subjclass[2010]{ 11T71, 14G50, 94B27}
\begin{document}
\begin{abstract}
In this paper, we study the behavior of the true dimension of the subfield subcodes of Hermitian codes. Our motivation is to use these classes of linear codes to improve the parameters of the McEliece cryptosystem, such that key size and security level. The McEliece scheme is one of the promising alternative cryptographic schemes to the current public key schemes since in the last four decades, they resisted all known quantum computing attacks. By analyzing computational data series of true dimension, we concluded that they can be estimated by the extreme value distribution function. 
\end{abstract}
\maketitle

\section{Introduction}

Recently, there has been a big amount of research addressed to quantum computers that use quantum mechanical techniques to solve hard problems in mathematics \cite{google2019quantum}. The existence of these powerful machines threaten many of the public-key cryptosystem that are widely in use. Combined with Shor's algorithms \cite{shor1999}, this would risk the confidentiality and integrity of today's digital communications. Post-quantum cryptography aims to construct and develop cryptosystem that must resist against quantum computing attacks.

McEliece \cite{mceliece1978public} introduced the first code-based public-key cryptosystem in 1978, where he employed error correcting codes to generate the public and private key with security relying on two aspects: NP-completeness of decoding linear codes and the distinguishing of the chosen codes. The original McEliece scheme was constructed with binary Goppa codes which are the subfield subcodes of generalized Reed-Solomon codes. Even today, this proposal represents a good candidate for post-quantum cryptography \cite{nnn}. There has been several attempts to find appropriate classes of codes and their parameters, which give rise to a secure and effective cryptosystem, for more details see \cites{Niebuhr2012,loidreau2000strengthening}. In this paper, we study the possibility of the application of subfield subcodes of Hermitian codes in the McEliece scheme. More precisely, we do the first step by investigating the true dimension of these codes for a broad spectrum of parameters, for partial results  see \cites{khalfaoui2019dimension,pinero2014subfield}. Our main observation is that the true dimension of subfield subcodes of Hermitian codes can be estimated by the extreme value distribution function.

In the literature, several attacks have been proposed against McEliece cryptosystem in general, and against McEliece systems based on AG codes, see \cites{baldi2013security,loidreau2000strengthening,Couvreur2017}. Attacks can be divided into two classes: structural, or key recovery attacks, aimed at recovering the secret code, and decoding, or message recovery attacks, aimed at decrypting the transmitted ciphertext. The generic decoding attack against the McEliece scheme is the information set decoding (ISD) algorithm. The most recent and most effective structural attack against AG code based McEliece systems is the Schur product distinguisher.

The structure of this paper is as follows. In section \ref{sec:background}, we review the necessary backgrounds to define subfield subcodes, algebraic geometry codes and Hermitian codes. In section \ref{sec:moments}, we introduce some tools borrowed from statistics in order to handle our computed data on the true dimension of subfield subcodes of Hermitian codes, the latter being presented in section \ref{sec:subcodes}. Our main result is Proposition \ref{pr:main} in section \ref{sec:fitting} which shows the excellent fitting properties of the extreme value distribution to our measurements. In section \ref{sec:application}, we applied this estimate to study the development of the key size of Hermitian subfield subcodes.

\section{Backgrounds, formulas}
\label{sec:background}

In this section, we give an overview on subfield subcodes, AG codes and some of their properties, to find full details please refer to the monographs \cites{hirschfeld2008algebraic,stepanov2012codes,stichtenoth2009algebraic}. Our terminology on coding theory is standard, see \cites{stepanov2012codes,hoholdt1995decoding}. In particular, with an $\mathbb{F}_q$-linear code of length $n$, we mean a linear subspace of $\mathbb{F}_q^n$. 

\subsection{Subfield subcodes}

Let $h$ be an integer and $r, q$ be prime powers with $q=r^h$. Then $\mathbb{F}_r$ is a subfield of $\mathbb{F}_q$ and the field extension $\mathbb{F}_q/\mathbb{F}_r$ has degree $h$. Let $C$ be an $\mathbb{F}_q$-linear code of length $n$ and dimension $k$. The $\mathbb{F}_q/\mathbb{F}_r$ subfield subcode of $C$ is defined by
\[
C|_{\mathbb{F}_r}=C\cap\mathbb{F}_r^n.
\]
The trace polynomial $\mathrm{Tr}(x)=x+x^r +\dots+x^{r^{h-1}}$ defines a map $\mathbb{F}_q \to \mathbb{F}_r$, which can be extended to a map $\mathbb{F}_q^n \to \mathbb{F}_r^n$ component wise. The trace code of the linear code $C$ is 
\[
\mathrm{Tr}(C)= \left\lbrace \mathrm{Tr}(c) \mid  c \in C \right\rbrace.
\]
Clearly, both the subfield subcode and the trace code are $\mathbb{F}_r$-linear codes of length $n$. However, it is in general very hard to determine the true dimension of these new codes. The fascinating result given by Delsarte \cite{Del} in 1975 plays a key role for studying the class of the subfield subcodes of linear codes. It established a closed link between subfield subcodes and trace codes:

\begin{equation*} 
(C|_{\mathbb{F}_r})^\perp = \mathrm{Tr}(C^{\perp}).
\end{equation*}
V\'eron \cite{V05} used this equation to give the exact dimension formula
\begin{equation}
\dim_{\mathbb{F}_r}(C|_{\mathbb{F}_r}) = n-h(n-k)+\dim_{\mathbb{F}_r} \ker (\mathrm{Tr}).
\end{equation}
In particular, we have the trace bound 
\begin{equation}
\dim_{\mathbb{F}_r}(C|_{\mathbb{F}_r}) \geq n-h(n-k).
\end{equation}

\subsection{Algebraic geometry codes}

In this section, we give an overview on the construction of algebraic geometry (AG) codes, which is a version of V.D. Goppa's original construction, since there are many ways to produce linear codes from algebraic curves. Also we give some details on the properties, parameters and duality of AG codes. AG codes are linear codes that use algebraic curves and finite fields for their construction. The construction can be done by evaluating functions (elements of the function field) or by computing residues of differentials. Our notation and terminology on algebraic plane curves over finite fields, their function fields, divisors and Riemann-Roch spaces are standard, see for instance \cites{hirschfeld2008algebraic,menezes2013applications,stichtenoth2009algebraic}.

Let $q$ be a prime power and $\mathbb{F}_q$ be the finite field of order $q$. Let $\mathcal{X}$ be an algebraic curve i.e. an affine or projective variety of dimension one, which is absolutely irreducible and nonsingular and whose
defining equations are (homogeneous) polynomials with coefficients in $\mathbb{F}_q$. Let $g$ be the genus of $\mathcal{X}$ and denote by ${\mathbb{F}}_q(\mathcal{X})$ the function field of $\mathcal{X}$. For a divisor of $D$ of ${\mathbb{F}}_q(\mathcal{X})$, the Riemann-Roch space is
\[\mathscr{L}(D)=\{f\in {\mathbb{F}}_q(\mathcal{X}) \mid  (f)\succcurlyeq -D\} \cup \{0\},\]
where $(f)$ is the principal divisor of $f$. The dimension $\ell(D)$ of $\mathscr{L}(D)$ is given by the Riemann-Roch Theorem \cite{stichtenoth2009algebraic}*{Theorem 1.1.15}:
\begin{equation} \label{eq:rrth}
\ell(D)= \ell(W-D) + \deg D - g + 1,
\end{equation}
where $W$ is a canonical divisor of ${\mathbb{F}}_q(\mathcal{X})$. Let $G$ and $D$ be two divisors of ${\mathbb{F}}_q(\mathcal{X})$ such that $D = P_1 + \cdots + P_n$ is the sum of $n$ distinct rational places of ${\mathbb{F}}_q(\mathcal{X})$ and $P_i\not\in \supp(G)$ for any $i$. With these data, two types of algebraic geometry codes can be constructed:
\begin{align*}
C_L(D,G)&=\left\lbrace \left( f(P_1),\cdots,f(P_n) \right) \mid f\in \mathscr{L}(D)   \right\rbrace ,\\
C_\Omega(D,G)&= \left\lbrace \left( res_{P_1}(\omega),\cdots,res_{P_n}(\omega)\right) \mid \omega \in \Omega(G-D)\right\rbrace .
\end{align*}
The codes $C_L(D,G)$ and $C_\Omega(D,G)$ are called the \textit{functional} and the \textit{differential codes,} respectively. These two codes are dual to each other. Moreover, the differential code $C_\Omega(D,G)$ is equivalent with the functional code $C_L(D,W+D-G)$. In particular, they have the same dimension and minimum distance, even though this equivalence does not preserve all important properties of the code. The formula 
\[k=\ell(G)-\ell(G-D)\]
for the dimension $k$ of $C_L(D,G)$ follows from the Riemann-Roch Theorem, which also provides a lower bound $\delta_\Gamma=n-\deg(G)$ for its minimum distance. The integer $\delta_\Gamma$ is called the \textit{Goppa designed minimum distance} of the AG code. 

We illustrate the behavior of the dimension $k$ of $C_L(D,G)$ depending on the degree of the divisor $G$ by Figure \ref{fig:dims}. In fact, \eqref{eq:rrth} implies the exact value $k=\deg(G)-g+1$ provided $2g-2 < \deg(G) < n$. Furthermore, if $\deg(G)>n+2g-2$, then $k=n$. In the intervals $[0,2g-2]$, and $[n,n+2g-2]$, the dimension depends on the specific structure of the divisor $G$. 

\begin{figure}
\caption{Dimension and designed minimum distance of AG codes}
\label{fig:dims}
\begin{center}
	\begin{tikzpicture}[scale=0.5]
	\draw[thick,->] (-0.5,0) -- (13,0) node[above right] {$\deg(G)$};
	\draw[thick,->] (0,-0.5) -- (0,10) node [right] {$\dim$};
	\draw[dashed] (0,9) -- (12,9) -- (12,0);
	\draw[dashed] (3.0,0) -- (3.0,9);
	\draw[dashed] (9,0) -- (9,9);
	\draw[thick,dotted] (1,-0.5) -- (11,9.5) node[above] {$\dim=\deg(G)-g+1$};
	\draw[blue,ultra thick] (3.0,1.5) -- (9,7.5);
	\draw[red,ultra thick,dashed] plot[domain=0:3] (\x, {\x^2/6});
	\draw[red,ultra thick,dashed] plot[domain=9:12] (\x, {9-(\x-12)^2/6});
	\draw[dotted] (0,9) -- (-1,10) node[above] {$\delta_\Gamma=n-\deg(G)$};
	\draw[green,ultra thick] (0,9) -- (9,0);
	\fill (0,9) circle (1mm) node[left=1mm] {$n$};
	\fill (0,0) circle (1mm) node[below=3mm,left=1mm] {$0$};
	\fill (3,0) circle (1mm) node[below=1mm] {$2g\!-\!2$};
	\fill (9,0) circle (1mm) node[below=1.8mm] {$n$};
	\fill (12,0) circle (1mm) node[below=1mm] {$n\!+\!2g\!-\!2$};
	\end{tikzpicture}
\end{center}
\end{figure}
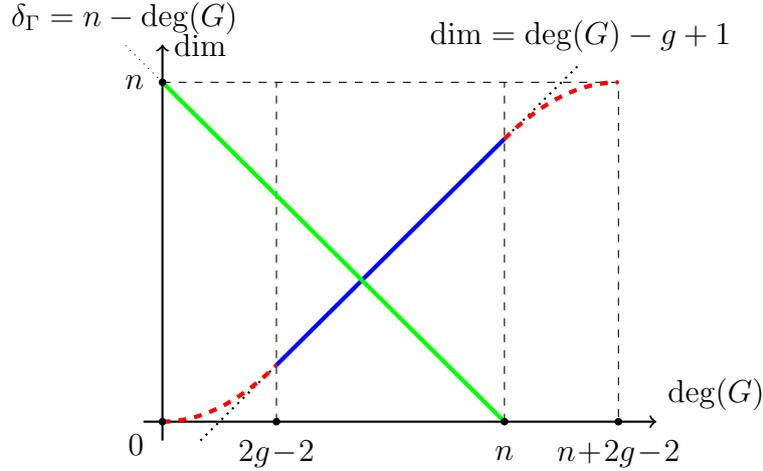

\subsection{On the decoding of AG codes}
Algebraic geometry codes are a generalization of Reed-Solomon codes, then it is not extraordinary that they benefit from similar decoding algorithms. The work on the decoding of AG codes seems to begin in 1986 when Driencourt gave a first decoding algorithm for codes on elliptic curves of characteristic $2$ \cite{driencourt1985some} correcting $\lfloor (\delta_\Gamma - 1)/2\rfloor$ errors. By generalizing the work of Arimoto and Peterson \cite{peterson1960encoding} on employing a locator polynomial to decode Reed-Solomon codes, Justesen, Larsen, Jensen, Havemose and H\o hold published \cite{justesen1989construction} in 1989 a decoding algorithm for a larger class of AG codes, which can correct up to $\lfloor (\delta_\Gamma -g -1)/2  \rfloor$ errors, moreover in improved version \cite{justesen1992fast} the error capability is increased to $\lfloor (\delta_\Gamma - g/2 -1 )/2 \rfloor$. This method was generalized to arbitrary curves by Skorobogatov and Vladut \cite{skorobogatov1990decoding}, and independently by Krachkovskii \cite{krachkovskii1988decoding}, then extended by Duursma \cites{duursma1993algebraic,duursma1993decoding} to correct $\lfloor (\delta_\Gamma -1 )/2 \rfloor - \sigma$ errors, where $\sigma$ is the Clifford defect of the curve \cite{duursma1993decoding}*{Definition 3.7} (is approximately $g/4$). In 1993, Feng and Rao \cite{feng1993decoding} gave a majority voting scheme allowing a decoding up to $\lfloor (\delta_\Gamma -1)/2 \rfloor$ errors. Duursma generalized this result to all AG codes \cite{duursma1993majority}. An efficient algorithm was described by Sakata, Justesen, Madelung, Jensen and H\o hold in \cite{sakata1995fast} using a multidimensional generalization of Massey-Berlekamp algorithm done by Sakata \cite{sakata1990extension}. Kirfel and Pellikaan \cite{kirfel1995minimum} noticed that one can decode beyond $\lfloor (\delta_\Gamma - 1)/2\rfloor$ errors for $1$--point AG codes by studying the Weierstrass semigroup. The reader can refer to \cites{hoholdt1995decoding,hoholdt1998algebraic,pellikaan1993efficient} for more details on decoding methods.

\subsection{Hermitian codes}

The classes of AG codes we study in this paper are defined over the Hermitian curve. Let $\mathbb{F}_q$ be a finite field and define the Hermitian curve $\mathscr{H}_q$ by the affine equation $Y^q+Y=X^{q+1}$. Notice that $\mathscr{H}_q$ is defined over $\mathbb{F}_{q^2}$, that is, its rational points are points of the projective plane $PG(2,q^2)$, satisfying the homogeneous equation $Y^qZ+YZ^q=X^{q+1}$. With respect to the line $Z=0$ at infinity, $\mathscr{H}_q$ has one infinite point $P_\infty=(0:1:0)$ and $q^3$ affine rational points $P_1,\ldots,P_{q^3}$. As usual, we also look at the curve $\mathscr{H}_q$ as the smooth curve defined over the algebraic closure $\bar{\mathbb{F}}_{q^2}$. Then, there is a one-to-one correspondence between the points of $\mathscr{H}_q$ and the places of the function field $\bar{\mathbb{F}}_{q^2}(\mathscr{H}_q)$ of $\mathscr{H}_q$. 

With a Hermitian code we mean a functional AG code of the form $C_L(D,G)$, where the divisor $D$ is defined as the sum $P_1+\cdots+P_{q^3}$ affine rational points of $\mathscr{H}_q$. In our investigations, the divisor $G$ can take two forms. In the \textit{1-point case,} we set $G=sP_\infty$ with integer $s$. In the \textit{degree 3 case,} we put $G=sP$, where $P$ is a place of degree $3$. Let $P_1,P_2,P_3$ be the extensions of $P$ in the constant field extension of $\mathbb F_{q^2}(\mathscr{H}_q)$ of degree $3$. Then $P_1,P_2,P_3$ are degree one places of $\mathbb F_{q^6}(\mathscr{H}_q)$ and, up to labeling the indices, $P_{j+1}=\Frob(P_j)$ where $\Frob$ is the $q^2$-th Frobenius map and the indices are taken modulo $3$. Also, $P$ may be identified with the $\mathbb F_{q^2}$-rational divisor $P_1+P_2+P_3$ of $\mathbb F_{q^6}(\mathscr{H}_q)$. Functional AG codes of the form $C_L(D,sP_\infty)$ and $C_L(D,sP)$ will be called $1$-point Hermitian codes, and Hermitian codes over a degree $3$ place, respectively. In the $1$-point case, the basis of the Riemann-Roch space $\mathscr{L}(sP_\infty)$ can be given explicitly by \cite{stepanov2012codes}:
\begin{equation*}
\mathcal{M} (s):=\left\lbrace x^i y^j  \mid  0\leq i \leq q^2 -1 , 0\leq j \leq q-1, qi+(q+1)j\leq s\right\rbrace.
\end{equation*}
In the degree $3$ case, the basis of
\[\mathscr L(sP)=\left\{\frac{f}{(\ell_1\ell_2\ell_3)^u} \mid f\in \mathbb{F}_{q^2}[X,Y],\deg f\le 3u, v_{P_i}(f) \geq v\right\}\cup\{0\}.\]
can be computed, see \cite{korchmarosnagy2013}. In this formula, $\ell_i=0$ is the equation of the tangent line of $\mathscr{H}_q$ at $P_i$, and $s=u(q+1)-v$, $0\leq v\leq q$. 

The group $\Aut(\mathscr{H}_q)$ of all automorphisms of $\mathscr{H}_q$ is defined over $\mathbb F_{q^2}$. It is a group of projective linear transformations of $PG(2 , q^2 )$, isomorphic to the projective unitary group $PGU( 3, q)$. Furthermore, $\Aut(\mathscr{H}_q)$ acts doubly transitively on the set $\{P_\infty,P_1,\ldots,P_{q^3}\}$ of $\mathbb{F}_{q^2}$-rational points. As it was pointed out in \cite{korchmarosnagy2013}, the automorphism group of $\mathscr{H}_q$ acts transitively on the set of degree $3$ places of $\mathbb F_{q^2}(\mathscr{H}_q)$, as well. Hence, the geometry of a degree $3$ place is independent on the choice of $P$. However, the stabilizer $G_P$ of $P$ in $\Aut(\mathscr{H}_q)$ is not transitive on the set of $q^3+1$ rational points. In fact, $G_P$ is a cyclic group of order $q^2-q+1$ and the number of $G_P$-orbits on the set of rational points is $q+1$. (See \cites{cossidente1999covered, korchmarosnagy2013}.)

\section{Moments of the extended rate of subfield subcodes}
\label{sec:moments}

In order to make our notation consistent, we make the following conventions. Let $\mathcal{X}$ be an algebraic curve over $\mathbb{F}_q$ and $D,G$ effective divisors such that the AG code $C_L(D,G)$ is well defined. Assume that the objects $\delta$ and $\gamma$ determine the curve $\mathcal{X}$ and the divisors $D,G$ in a unique way. Let $s$ be an integer and $\mathbb{F}_r$ be a subfield of $\mathbb{F}_q$. Then, 
\[C_{\delta,r}^\gamma(s) = C_L(D,sG)|_{\mathbb{F}_r}\]
denotes the $\mathbb{F}_q/\mathbb{F}_r$ subfield subcode of the AG code $C_L(D,sG)$. The length of $C_{\delta,r}^\gamma(s)$ is $n=\deg(D)$. 

For the integer $s$, let 
\[R(s)=R_{\delta,r}^\gamma(s)=\frac{\dim_{\mathbb{F}_r} C_{\delta,r}^\gamma(s)}{n}\]
denote the rate of the subfield subcode $C_{\delta,r}^\gamma(s)$.  We extend $R_{\delta,r}^\gamma$ to $\mathbb{R}$ in the usual way: $R_{\delta,r}^\gamma(x)=R_{\delta,r}^\gamma(\lfloor x \rfloor)$.

 
\begin{lemma} \label{lm:R_prop}
Let $g$ be the genus of $\mathcal{X}$ and define 
\[\alpha=\left\lceil\frac{n+2g-2}{\deg(G)}\right\rceil.\]
Then $R(x)$ is a monotone increasing function, with
\[R(x) = \begin{cases}
0 & \text{for $x<0$},\\
1 & \text{for $x\geq \alpha$}.
\end{cases}
\]
\end{lemma}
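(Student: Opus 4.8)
The plan is to reduce the whole statement to two elementary properties of the functional codes $C_L(D,sG)$: that they are nested as the multiplier $s$ grows, and that their $\mathbb{F}_q$-dimension is governed by Riemann--Roch via \eqref{eq:rrth}. For the monotonicity, I would use that $G$ is effective, so $sG\preccurlyeq(s+1)G$ and hence $\mathscr{L}(sG)\subseteq\mathscr{L}((s+1)G)$, giving $C_L(D,sG)\subseteq C_L(D,(s+1)G)$. Intersecting with $\mathbb{F}_r^n$ preserves inclusions, so $C_{\delta,r}^\gamma(s)\subseteq C_{\delta,r}^\gamma(s+1)$ and $\dim_{\mathbb{F}_r}C_{\delta,r}^\gamma(s)$ is non-decreasing in the integer $s$; since $R(x)=R(\lfloor x\rfloor)$, monotonicity of $R$ on all of $\mathbb{R}$ follows at once.

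For the lower plateau, take $x<0$, so $s=\lfloor x\rfloor\le-1$ and $\deg(sG)=s\deg(G)<0$. A nonzero $f\in\mathscr{L}(sG)$ would satisfy $(f)+sG\succcurlyeq 0$, forcing $0=\deg((f))\ge-\deg(sG)>0$, which is absurd; thus $\mathscr{L}(sG)=\{0\}$, the code is trivial, and $R(x)=0$. For the upper plateau the key reduction is that $C_{\delta,r}^\gamma(s)=C_L(D,sG)\cap\mathbb{F}_r^n$ equals $\mathbb{F}_r^n$ precisely when $\mathbb{F}_r^n\subseteq C_L(D,sG)$; because the standard basis of $\mathbb{F}_q^n$ already lies in $\mathbb{F}_r^n$ and spans $\mathbb{F}_q^n$ over $\mathbb{F}_q$, this forces $C_L(D,sG)=\mathbb{F}_q^n$, i.e.\ $k=\dim_{\mathbb{F}_q}C_L(D,sG)=n$. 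Hence the whole task collapses to showing $k=n$ for every integer $s\ge\alpha$.

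The main computation then runs through $k=\ell(sG)-\ell(sG-D)$: whenever $\deg(sG)>2g-2+n$ we have $\ell(sG)=\deg(sG)-g+1$ and $\ell(sG-D)=\deg(sG)-n-g+1$, so $k=n$. Since $G$ is effective with $\deg(G)\ge1$, the choice $\alpha=\lceil(n+2g-2)/\deg(G)\rceil$ guarantees $\deg(sG)=s\deg(G)\ge\alpha\deg(G)\ge n+2g-2$ for $s\ge\alpha$; combined with $R\le1$ and the monotonicity already established, this would pin $R(x)=1$ for all $x\ge\alpha$.

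The step I expect to be the genuine obstacle is the boundary equality $\deg(sG)=n+2g-2$, which is exactly the ambiguous regime $[n,n+2g-2]$ of Figure \ref{fig:dims} and occurs when $\deg(G)\mid(n+2g-2)$ at $s=\alpha$. There $\deg(sG-D)=2g-2$, and Riemann--Roch only yields $k=n-\ell(W-sG+D)$ with $\deg(W-sG+D)=0$, so $k=n$ holds if and only if $sG-D$ is \emph{not} a canonical divisor. This is where the argument must be done by hand: I would exhibit an explicit function witnessing $sG-D\not\sim W$ (equivalently, surjectivity of $\mathscr{L}(\alpha G)\to\mathbb{F}_q^n$). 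The case deserving particular care is the $1$-point case $\deg(G)=1$, where $\alpha=n+2g-2$ and the function $x^{q^2}-x$ gives $q^3P_\infty\sim D$, so that $\alpha P_\infty-D\sim (2g-2)P_\infty=W$ is canonical and $k=n-1$; thus full rate is attained only from $s=\alpha+1$ onward, and the statement is to be read with this endpoint understood (equivalently, under the strict bound $\deg(sG)>n+2g-2$).
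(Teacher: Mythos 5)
Your proof is correct, and it is in fact more careful than the paper's own argument. For the substance of the claim the two routes are cousins: the paper argues dually, noting that $s\deg(G)>n+2g-2$ forces $\deg(D+W-sG)<0$, hence $C_\Omega(D,sG)\cong C_L(D,D+W-sG)=\{0\}$ and so $C_L(D,sG)=C_\Omega(D,sG)^\perp=\mathbb{F}_q^n$, while you work directly with $k=\ell(sG)-\ell(sG-D)$ and two applications of Riemann--Roch; both reach $k=n$ under the strict inequality, and your reduction (that the subfield subcode is all of $\mathbb{F}_r^n$ iff the parent code is all of $\mathbb{F}_q^n$) is the same spanning argument the paper leaves implicit. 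The monotonicity statement and the case $x<0$ are not addressed in the paper's proof at all; your arguments for both are the natural ones and are correct.

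The genuine difference is the endpoint $s=\alpha$, and here you have caught a real flaw rather than just a loose end. The paper proves the conclusion only when $s\deg(G)>n+2g-2$, yet asserts it ``for $s\ge\alpha$''; these coincide only if $\deg(G)\nmid(n+2g-2)$. If $\deg(G)\mid(n+2g-2)$, then $\alpha\deg(G)=n+2g-2$ exactly, and, as you compute, $k=n-\ell(W-\alpha G+D)$ with $\deg(W-\alpha G+D)=0$, so $R(\alpha)=1$ iff $\alpha G-D$ is not canonical. Your 1-point Hermitian counterexample settles this negatively in the paper's principal case: $\deg(G)=1$, $\alpha=n+2g-2$, and $(x^{q^2}-x)=D-q^3P_\infty$ gives $D\sim nP_\infty$, so $\alpha P_\infty-D\sim(2g-2)P_\infty$, which \emph{is} canonical on $\mathscr{H}_q$ (it is the divisor of $dx$, since $x$ is unramified at every affine point); hence $\dim C_L(D,\alpha P_\infty)=n-1$ and $R(\alpha)<1$. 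So the lemma as stated fails at $x=\alpha$ for 1-point Hermitian codes, consistent with the paper's own caveat that in the interval $[n,n+2g-2]$ the dimension depends on the structure of $G$; the correct statement has $x\ge\alpha+1$ there, or equivalently defines $\alpha$ by the strict bound $\alpha\deg(G)>n+2g-2$. One point you left open but which is worth recording: in the degree-3 case the boundary never occurs, since $n+2g-2=q^3+q^2-q-2\not\equiv 0\pmod 3$ for every prime power $q$, so for those codes the lemma holds exactly as stated.
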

\begin{proof}
If $s\deg(G)>n+2g-2$, then $\deg(D+W-G)<0$, and
\[C_\Omega(D,G)\cong C_L(D,D+W-G) = \{0\}.\]
Hence, if $s\geq \alpha$, then $C_L(D,sG)=\mathbb{F}_q^n$ and $C_L(D,sG)|_{\mathbb{F}_r}=\mathbb{F}_r^n$. 
\end{proof}
The following observation has been made in \cite{khalfaoui2019dimension}*{Theorem 5.1} for the special case of a one point divisor of a Hermitian curve.
\begin{lemma}
For $0\leq x < n/(r\deg(G))$, we have $R(x)=1/n$. 
\end{lemma}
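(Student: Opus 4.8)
The plan is to show that the only codewords of $C_L(D,sG)$ lying in $\mathbb{F}_r^n$ are the constant (repetition) vectors, so that the subfield subcode is precisely the repetition code over $\mathbb{F}_r$, of dimension $1$. This gives $R(x)=1/n$ once I establish both inequalities $1\leq \dim_{\mathbb{F}_r}C_{\delta,r}^\gamma(s)\leq 1$, where $s=\lfloor x\rfloor$.

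First I would record the easy lower bound. Since $G$ is effective and $s\geq 0$, the constant functions satisfy $\mathbb{F}_q\subseteq \mathscr{L}(sG)$, so for each $c\in\mathbb{F}_r$ the evaluation of the constant function $c$ is the vector $(c,\dots,c)\in\mathbb{F}_r^n\cap C_L(D,sG)$. These form a $1$-dimensional $\mathbb{F}_r$-subspace, hence $\dim_{\mathbb{F}_r}C_{\delta,r}^\gamma(s)\geq 1$, and it remains to prove the matching upper bound.

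For the upper bound, I would take any $f\in\mathscr{L}(sG)$ whose codeword $(f(P_1),\dots,f(P_n))$ lies in $\mathbb{F}_r^n$, and apply the Frobenius substitution $f\mapsto f^r$. Because each $P_i\notin\supp(G)$, the function $f$ is regular at $P_i$ and evaluation $f\mapsto f(P_i)$ is a ring homomorphism onto the residue field $\mathbb{F}_q$; therefore $f^r(P_i)=f(P_i)^r$. As $f(P_i)\in\mathbb{F}_r$, we have $f(P_i)^r=f(P_i)$, so the function $f^r-f$ vanishes at every $P_i$. The key step is then a degree count: from $(f)\succcurlyeq -sG$ one gets $(f^r)=r(f)\succcurlyeq -rsG$, and since $rsG\succcurlyeq sG$ (using that $G$ is effective and $s\geq 0$) also $f\in\mathscr{L}(rsG)$; hence $f^r-f\in\mathscr{L}(rsG)$, and combined with its vanishing on $D$ we land in $\mathscr{L}(rsG-D)$. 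Now $\deg(rsG-D)=rs\deg(G)-n$, and the hypothesis $x<n/(r\deg(G))$ — whence $s=\lfloor x\rfloor<n/(r\deg(G))$ — forces this degree to be strictly negative. A divisor of negative degree has trivial Riemann-Roch space, so $f^r-f=0$, i.e.\ $f^r=f$. The only solutions of $X^r-X=0$ in $\mathbb{F}_q(\mathcal{X})$ are the elements of $\mathbb{F}_r$, so $f$ is a constant in $\mathbb{F}_r$ and its codeword is a repetition vector. This yields $\dim_{\mathbb{F}_r}C_{\delta,r}^\gamma(s)\leq 1$, and together with the lower bound $R(x)=1/n$.

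The argument is short and its real content is the substitution $f\mapsto f^r$; once that is in place everything reduces to the observation that a negative-degree divisor kills its Riemann-Roch space. The one point that needs care — and where the precise threshold $n/(r\deg(G))$ originates — is verifying that $f^r-f$ genuinely lies in $\mathscr{L}(rsG)$ rather than a larger space, so that $\deg(rsG-D)$ is exactly $rs\deg(G)-n$. The appearance of $r$ (and not $s$) in the denominator is a direct consequence of raising to the $r$-th power, and this is the main subtlety to get right.
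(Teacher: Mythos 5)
Your proof is correct, but it takes a genuinely different route from the paper's. The paper argues by contradiction via pigeonhole and zero-counting: if the subfield subcode contained a non-constant codeword $\bm{v}=(f(P_1),\ldots,f(P_n))\in\mathbb{F}_r^n$, then among the $n$ entries, which take at most $r$ distinct values, some value $c\in\mathbb{F}_r$ would occur at least $n/r$ times; since the nonzero function $f-c\in\mathscr{L}(sG)$ has at most $s\deg(G)$ zeros, this forces $n\leq rs\deg(G)$, contradicting $s<n/(r\deg(G))$. You instead use the Frobenius substitution: $f^r-f$ vanishes on all of $D$ and lies in $\mathscr{L}(rsG)$, hence in $\mathscr{L}(rsG-D)$, which is trivial because $\deg(rsG-D)=rs\deg(G)-n<0$; so $f^r=f$ and $f$ is a constant in $\mathbb{F}_r$. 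Both arguments pivot on the same underlying fact (a nonzero function in $\mathscr{L}(A)$ has at most $\deg A$ zeros, equivalently a negative-degree divisor has trivial Riemann--Roch space), and both produce exactly the threshold $n/(r\deg(G))$: in the paper's version the factor $r$ enters through the pigeonhole over the $r$ elements of $\mathbb{F}_r$, in yours through raising to the $r$-th power. Your version is direct rather than by contradiction, and it identifies the subfield subcode exactly as the repetition code via the Artin--Schreier-type identity $X^r-X=\prod_{c\in\mathbb{F}_r}(X-c)$, a style of argument that meshes well with the trace-code viewpoint (Delsarte's theorem) used elsewhere in the paper; the paper's version is shorter and needs only that the entries take at most $r$ values, not that those values form a subfield.
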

\begin{proof}
As the divisor $sG$ is positive for $s>0$, the constant vectors are in $C_L(D,sG)|_{\mathbb{F}_r}$ and $R(s)\geq 1/n$ holds. Assume $R(s)>1/n$, that is, the subfield subcode contains a non constant element $\bm{v}=(f(P_1),\ldots,f(P_n))$ with $f\in \mathscr{L}(sG)$. Since $f$ cannot have more than $\deg(sG)$ zeros, $\bm{v}$ cannot have the same entry more than $s\deg(G)$ times. This implies $r\deg(sG)\geq n$. 
\end{proof}

Lemma \ref{lm:R_prop} implies that we can consider $R(x)$ as the distribution function of some random variable $\xi$, cf. \cite{ShProb1}*{Definition 1, Section 2.3}. 

\begin{lemma} \label{lm:moments}
Let $R(x)$ be the extended rate function of a class of subfield subcodes $C_L(D,sG)|_{\mathbb{F}_r}$. Define the integer $\alpha$ as in Lemma \ref{lm:R_prop}. Let $\xi$ be a random variable with distribution function $R(x)$. Then
\[\E(\xi)=\sum_{s=0}^\alpha 1-R(s), \qquad \E(\xi^2)=\sum_{s=0}^\alpha (2s+1)(1-R(s)).\]
\end{lemma}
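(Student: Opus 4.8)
The plan is to compute the expectation and second moment of $\xi$ directly from the definition, using the fact that $R(x)$ is a step function that is right-continuous and jumps only at integer points. By Lemma~\ref{lm:R_prop}, $\xi$ takes values in the finite set $\{0,1,\ldots,\alpha\}$, since $R(x)=0$ for $x<0$ and $R(x)=1$ for $x\geq\alpha$. First I would record the point masses: because $R$ is constant on each half-open interval $[s,s+1)$, the probability that $\xi=s$ equals the jump $R(s)-R(s-1)$ for each integer $s$ with $0\le s\le\alpha$, and $\xi$ takes no other values.

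Next I would write $\E(\xi)=\sum_{s=0}^\alpha s\,(R(s)-R(s-1))$ and apply Abel summation (summation by parts) to convert this into a sum involving $1-R(s)$ rather than the increments. The standard identity here is the tail-sum formula for a nonnegative integer-valued random variable,
\[
\E(\xi)=\sum_{s\geq 0}\Pr(\xi>s)=\sum_{s\geq 0}\bigl(1-R(s)\bigr),
\]
where the sum truncates at $s=\alpha$ because $1-R(s)=0$ for $s\geq\alpha$ (and the $s=\alpha$ term itself vanishes, so including it is harmless). This gives the first claimed formula. The only subtlety is matching the convention that $\Pr(\xi>s)=1-R(s)$ exactly, which follows from right-continuity of the distribution function and the jumps lying at integers.

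For the second moment I would use the analogous tail formula $\E(\xi^2)=\sum_{s\geq 0}(2s+1)\Pr(\xi>s)$. This comes from the telescoping identity $s^2=\sum_{j=0}^{s-1}(2j+1)$, so that $\E(\xi^2)=\sum_j (2j+1)\Pr(\xi>j)$ after interchanging the order of summation; again the series terminates at $s=\alpha$ since $1-R(s)$ vanishes beyond that point, yielding
\[
\E(\xi^2)=\sum_{s=0}^\alpha (2s+1)\bigl(1-R(s)\bigr).
\]

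I do not expect a serious obstacle here, as both formulas are instances of the classical tail-sum representation of moments for nonnegative integer-valued random variables. The main point requiring care is the justification that $\xi$ is genuinely integer-valued with distribution function $R$, which is exactly what Lemma~\ref{lm:R_prop} and the preceding remark supply: monotonicity, the correct limiting values $0$ and $1$, and concentration of the jumps at integers. Once that identification is made, the derivation is a routine application of summation by parts, and the truncation at $\alpha$ is automatic.
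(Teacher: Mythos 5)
Your proof is correct and rests on exactly the same identity the paper invokes: the paper's entire proof is a citation of the tail-sum representation of moments (Shiryaev, Section 2.6, Corollary 2), which is precisely the formula $\E(\xi)=\sum_{s\ge 0}\Pr(\xi>s)$ and its second-moment analogue $\E(\xi^2)=\sum_{s\ge 0}(2s+1)\Pr(\xi>s)$ that you derive from scratch via the jump decomposition and summation by parts. Your version is simply a self-contained proof of the cited result, with the right-continuity and truncation-at-$\alpha$ details correctly handled, so no gap remains.
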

\begin{proof}
This follows from \cite{ShProb1}*{Section 2.6, Corollary 2}.
\end{proof}

\begin{remark}
Considered as a distribution function, $R_{\delta,r}^\gamma(s)$ has an expectation $\E_{\delta,r}^\gamma$, a variance $\Var_{\delta,r}^\gamma$ and a standard deviation $\D_{\delta,r}^\gamma$. These constants can be computed from the true dimensions of the subfield subcodes using Lemma \ref{lm:moments} and the well known formulas of random variables.
\end{remark}

\section{Computed true dimensions of Hermitian subfield subcodes}
\label{sec:subcodes}

Let $q$ be a prime power. We say that the object $\delta=q$ determines the Hermitian curve $\mathscr{H}_q$ over $\mathbb{F}_{q^2}$, together with the divisor $D$ which is the sum of affine rational points of $\mathscr{H}_q$. The objects $\gamma=\text{1-pt}$ or $\gamma=\text{deg-3}$ determine the divisor $G$ to be equal either to the rational infinite place $P_\infty$, or the degree 3 Hermitian place $P$, respectively. That being said, for any integer $s$ and subfield $\mathbb{F}_r$ of $\mathbb{F}_{q^2}$, the Hermitian subfield subcodes 
\[C^\text{1-pt}_{q,r}(s) =C_L(D,sP_\infty)|_{\mathbb{F}_{r}}, \qquad
C^\text{deg-3}_{q,r}(s) = C_L(D,sP)|_{\mathbb{F}_{r}}
\]
are well defined and consistent with the notation of section \ref{sec:moments}. These codes are $\mathbb{F}_r$-linear codes of length $n=q^3$. 

%

Let $R^\text{1-pt}_{q,r}(s)$ and $R^\text{deg-3}_{q,r}(s)$ be the true rates of the codes $C^\text{1-pt}_{q,r}(s)$ and $C^\text{deg-3}_{q,r}(s)$. Using the GAP \cite{GAP4.10.2} package \texttt{HERmitian} \cite{HERmitian01}, we have been able to compute the true dimension values of the codes $C_{q,q}^\text{1-pt}(s)$, $C_{q,q}^\text{deg-3}(s)$ for
\[q \in \{2,3,4,5,7,8,9,11,13\}\]
and the binary codes $C_{q,2}^\text{1-pt}(s)$, $C_{q,2}^\text{deg-3}(s)$ for
\[q\in \{2,4,8,16\}.\]
(Cf. \cite{khalfaoui2019dimension} for preliminary results on explicit computation of subfield subcodes of Hermitian $1$-point codes.) 

As given in Lemma \ref{lm:moments}, we computed the expectations $\E_{q,q}^\text{1-pt}$, $\E_{q,2}^\text{1-pt}$, $\E_{q,q}^\text{deg-3}$, $\E_{q,2}^\text{deg-3}$, the variances $\Var_{q,q}^\text{1-pt}$, $\Var_{q,2}^\text{1-pt}$, $\Var_{q,q}^\text{deg-3}$, $\Var_{q,2}^\text{deg-3}$, and the standard deviations $\D^\textbf{1-pt}_{q,r}$, $\D_{q,2}^\text{1-pt}$, $\D_{q,q}^\text{deg-3}$, $\D_{q,2}^\text{deg-3}$ for these true rates. The numerical results are shown in Table \ref{tbl:evar_q} for $q=3,4,5,7,8,9,11,13$ and $r=q$, and in Table \ref{tbl:evar_2} for $q=2,4,8,16$ and $r=2$. In Figure \ref{fig:ratios}, we present the ratios $\E^\gamma_{q,r}\deg(G)/n$ and $\D^\gamma_{q,r}\deg(G)/n$, where $\gamma\in\{\text{1-pt, deg-3}\}$. While our data sets are small, these figures motivate the following open problem.
\begin{problem}
Are there constants $c_1,c_2>0$ such that
\[\E^\text{1-pt}_{q,q} \approx \E^\text{deg-3}_{q,q} \approx c_1 q^3/\deg(G), \qquad \D^\text{1-pt}_{q,q} \approx \D^\text{deg-3}_{q,q} \approx c_2 q^3/\deg(G),\]
where $a\approx b$ means $a/b \to 1$ with $q\to \infty$. 
\end{problem}
\begin{remark} \label{rem:choice}
Our data suggests that for small $q$, the choice $c_1=0.75$ and $c_2=0.2$ is sound. 
\end{remark}

\begin{table}
\caption{Expectations and variances for Hermitian $\mathbb{F}_{q^2}/\mathbb{F}_{q}$ subfield subcodes \label{tbl:evar_q}}
\centering
\begin{tabular}{|c| %
>{\raggedleft}p{0.15\textwidth}>{\raggedleft}p{0.15\textwidth}| %
>{\raggedleft}p{0.15\textwidth}>{\raggedleft\arraybackslash}p{0.15\textwidth}|} 
\hline
\multirow{2}{*}{$q$} & \multicolumn{2}{c|}{1-point codes} & \multicolumn{2}{c|}{Codes over a degree $3$ place} \\ 
\cline{2-5}
 & \multicolumn{1}{c}{Expectation} & \multicolumn{1}{c|}{Variance} & \multicolumn{1}{c}{Expectation} & \multicolumn{1}{c|}{Variance} \\ 
\hline
$3$ & $20.15$ & $53.46$ & $7.63$ & $4.09$ \\
$4$ & $48.66$ & $246.79$ & $17.77$ & $16.02$ \\
$5$ & $95.04$ & $841.16$ & $33.37$ & $60.18$ \\
$7$ & $259.10$ & $5\,553.32$ & $88.99$ & $503.78$ \\
$8$ & $385.49$ & $11\,862.84$ & $131.61$ & $1\,106.63$ \\
$9$ & $546.30$ & $23\,541.65$ & $186.22$ & $2\,206.21$ \\
$11$ & $992.73$ & $74\,679.83$ & $336.49$ & $7\,262.13$ \\
$13$ & $1\,631.29$ & $197\,675.07$ & $550.94$ & $19\,807.94$ \\
\hline
\end{tabular}
\end{table}

\begin{table}
\caption{Expectations and variances for Hermitian $\mathbb{F}_{q^2}/\mathbb{F}_{2}$ subfield subcodes \label{tbl:evar_2}}
\centering
\begin{tabular}{|c| %
>{\raggedleft}p{0.15\textwidth}>{\raggedleft}p{0.15\textwidth}| %
>{\raggedleft}p{0.15\textwidth}>{\raggedleft\arraybackslash}p{0.15\textwidth}|} 
\hline
\multirow{2}{*}{$q$} & \multicolumn{2}{c|}{1-point codes} & \multicolumn{2}{c|}{Codes over a degree $3$ place} \\ 
\cline{2-5}
 & \multicolumn{1}{c}{Expectation} & \multicolumn{1}{c|}{Variance} & \multicolumn{1}{c}{Expectation} & \multicolumn{1}{c|}{Variance} \\ 
\hline
$2$ & $5.38$ & $6.48$ & $2.12$ & $0.86$ \\
$4$ & $54.86$ & $164.96$ & $20.38$ & $10.52$ \\
$8$ & $458.22$ & $4\,838.52$ & $162.50$ & $216.32$ \\
$16$ & $3\,698.92$ & $195\,390.48$ & $1\,303.40$ & $6\,029.44$ \\
\hline
\end{tabular}
\end{table}

\begin{figure}
\caption{The ratios of expectations and standard deviations to $n/\deg(G)$\label{fig:ratios}}
\centering
\begin{tikzpicture}[scale=0.7]
\begin{axis}[xlabel={$q\in\{2,4,5,7,8,9,11,13\}$, $r=q$}]
\addplot[color=blue,mark=x] coordinates {
(3,0.746) (4,0.760) (5,0.760) (7,0.755) (8,0.753) (9,0.749) (11,0.746) (13,0.743)
};
\addplot[color=blue,mark=o] coordinates {
(3,0.848) (4,0.833) (5,0.801) (7,0.778) (8,0.771) (9,0.766) (11,0.758) (13,0.752)
};
\addplot[dashed,color=red,mark=x] coordinates {
(3,0.271) (4,0.245) (5,0.232) (7,0.217) (8,0.213) (9,0.210) (11,0.205) (13,0.202) 
};
\addplot[dashed,color=red,mark=o] coordinates {
(3,0.225) (4,0.188) (5,0.186) (7,0.196) (8,0.195) (9,0.193) (11,0.192) (13,0.192) 
};
\end{axis}
%
\begin{axis}[xlabel={$q\in\{2,4,8,16\}$, $r=2$},legend style={at={(0.97,0.5)},anchor=east},xshift=8cm]
\addplot[color=blue,mark=x] coordinates {
(2,0.672) (4,0.857) (8,0.895) (16,0.903) 
};
\addlegendentry{$E^\textbf{1-pt}_{q,r}$}
\addplot[color=blue,mark=o] coordinates {
(2,0.797) (4,0.955) (8,0.952) (16,0.955)
};
\addlegendentry{$E^\textbf{deg-3}_{q,r}$}
\addplot[dashed,color=red,mark=x] coordinates {
(2,0.318) (4,0.201) (8,0.136) (16,0.108) 
};
\addlegendentry{$D^\textbf{1-pt}_{q,r}$}
\addplot[dashed,color=red,mark=o] coordinates {
(2,0.348) (4,0.152) (8,0.086) (16,0.057)
};
\addlegendentry{$D^\textbf{deg-3}_{q,r}$}
\end{axis}
\end{tikzpicture}
\end{figure}
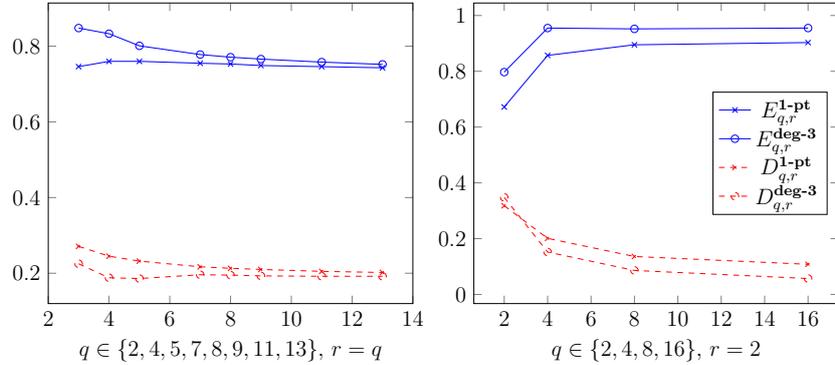

\section{Distribution fitting}
\label{sec:fitting}

In general, no explicit formula is known for the true dimension of subfield subcodes of AG codes. Our goal is to use the method of distribution fitting in order to study the behavior of these true dimensions in the case of subfield subcodes of Hermitian codes. 

As in the previous sections, we use the notation $\mathscr{H}_q$ for the Hermitian curve over $\mathbb{F}_{q^2}$, $P_\infty,P$ for the places of degree $1$ and $3$, $D$ and $G\in\{P_\infty,P\}$ for the divisors, and $C_{q,r}^\gamma(s)$, $\gamma \in \{\text{1-pt, deg-3}\}$, for the  $\mathbb{F}_{q^2}/\mathbb{F}_r$ subfield subcodes $C_L(D,sG)|_{\mathbb{F}_{r}}$. Then, with fixed $q,r$ and $\gamma \in \{\text{1-pt, deg-3}\}$ the dimensions of the subfield subcodes are given by the extended rate function $R_{q,r}^\gamma(x)$.

\[R_{q,q}^\text{1-pt}(x), \quad R_{q,2}^\text{1-pt}(x), \quad R_{q,q}^\text{deg-3}(x), \quad R_{q,2}^\text{deg-3}(x).\]
Our goal is to consider these functions as distribution functions and fit some well known probability distribution functions to our experimental rate function $R(x)$. 

We obtain numerical results by using the distribution fitting methods offered by MATLAB's Statistics and Machine Learning Toolbox \cite{matlabstat}. The technique MLE (Maximum Likelihood Estimation) is a method for estimating the parameters of a probability distribution from a data set. The method finds the parameter values maximizing the logarithm of the likelihood function \cite{eliason1993maximum}. In order to compare different distributions for a given data set, one can use the log-likelihood values for a ranking. This is implemented MATLAB's \texttt{fitmethis} function \cite{fitmethis2020matlab}. Notice that \texttt{fitmethis} also computes the AIC value for each distribution, which stands for Akaike Information Criterion, that measures the quality of a model (distribution) versus the other models. It has the formula 
\[AIC=2l-2\log(\hat{L})\]
where $l$ is the number of parameters and $\hat{L}$ is the maximum values of the likelihood function. In the case of AIC, smaller values correspond to better fitting distributions (see \cite{konishi2008information}).  

In our comparisons, we restricted ourselves to parametric distributions having at most two parameters, that is, we used MATLAB's \texttt{fitmethis} to compare the log-likelihood values of the following distributions: normal, exponential, gamma, logistic, uniform, extreme value, Rayleigh, beta, Nakagami, Rician, inverse Gaussian, Birnbaum-Saunders, log-logistic, log-normal and Weibull. We can summarize the results as follows:

\begin{proposition} \label{pr:main}
\begin{enumerate}
\item The best fitting distribution was the extreme value distribution for $R_{q,q}^\text{1-pt}(x)$, $q\in \{4,5,7,8,9,11,13\}$, for $R_{q,q}^\text{deg-3}(x)$, $q\in \{7,8,9,11,13\}$, and for $R_{8,2}^\text{1-pt}(x)$, $R_{16,2}^\text{1-pt}(x)$, $R_{4,2}^\text{deg-3}(x)$, $R_{8,2}^\text{deg-3}(x)$, and $R_{16,2}^\text{deg-3}(x)$.
\item For the missing cases $R_{2,2}^\text{1-pt}(x)$, $R_{3,3}^\text{1-pt}(x)$, $R_{2,2}^\text{deg-3}(x)$, $R_{3,3}^\text{deg-3}(x)$, $R_{4,4}^\text{deg-3}(x)$, and $R_{5,5}^\text{deg-3}(x)$, the best fitting distribution was the gamma distribution. 
\item The second best fitting distribution was the extreme value distribution for $R_{3,3}^\text{1-pt}(x)$, $R_{3,3}^\text{deg-3}(x)$, $R_{4,4}^\text{deg-3}(x)$, $R_{5,5}^\text{deg-3}(x)$. 
\end{enumerate}
\end{proposition}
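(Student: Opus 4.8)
The plan is to read Proposition \ref{pr:main} as a statement about the outcome of a reproducible distribution-fitting protocol applied to the exactly computed rate functions, rather than as a closed-form analytic claim. The first step is to certify that each $R_{q,r}^\gamma(x)$ is legitimately a distribution function: by Lemma \ref{lm:R_prop} it is monotone increasing, vanishes for $x<0$, and equals $1$ for $x\ge \alpha$, so it is the distribution function of a discrete random variable $\xi$ supported on $\{0,1,\dots,\alpha\}$ with point masses $p_s=R(s)-R(s-1)\ge 0$ that telescope to $1$. These masses are exactly the normalized dimension increments $(\dim_{\mathbb{F}_r}C(s)-\dim_{\mathbb{F}_r}C(s-1))/n$, each a nonnegative integer after multiplication by $n$, all of which are available from the GAP/\texttt{HERmitian} computation described in Section \ref{sec:subcodes}. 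This is what makes the empirical object a bona fide sample and justifies the remainder of the procedure.

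Next I would assemble, for each triple $(q,r,\gamma)$, the concrete integer-weighted data set whose frequency at $s$ is the increment $n\,p_s$, so that the total sample size is $n$, and feed it to MATLAB's \texttt{fitmethis} over the fifteen listed one- and two-parameter families. For every candidate family \texttt{fitmethis} returns the maximum-likelihood parameters together with the maximized log-likelihood $\log\hat L$ and the value $AIC=2l-2\log\hat L$. Since the number of parameters $l$ is at most $2$ and is essentially constant across the competing families, ranking by $AIC$ coincides with ranking by $\log\hat L$; the three items of the proposition are then obtained by simply reading off, case by case, which family attains the largest (respectively second largest) $\log\hat L$. Parts (1)--(3) are thus a tabulation of these rankings.

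The honest difficulty is that there is no analytic shortcut: the statement is the \emph{result} of a numerical optimization, so the genuine content of the proof is a transparent and reproducible protocol together with the reported $AIC$/log-likelihood values, not a derivation. Two points require care. First, fairness of the comparison: the continuous candidate densities are being fit to data that is intrinsically discrete and supported on the bounded interval $[0,\alpha]$, so one must evaluate every family on the \emph{same} data set and with the \emph{same} likelihood convention, which is precisely what \texttt{fitmethis} enforces. Second, sample size: for the small-$q$ exceptional cases listed in part (2) the support $\{0,\dots,\alpha\}$ is short and the sample correspondingly tiny, which is exactly where the ranking is most sensitive and where the extreme value family is beaten by the gamma family, while documenting that it remains the runner-up there is the content of part (3). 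I expect the main obstacle to be less a mathematical one than a reproducibility one --- pinning down the exact sample construction and likelihood normalization so that the reported rankings are stable --- whereas any attempt to \emph{explain}, as opposed to verify, why the extreme value law should dominate would require a separate heuristic about the growth of $\ell(sG)$ and the Weierstrass gap structure, which the proposition does not claim.
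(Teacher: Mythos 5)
Your proposal is correct and takes essentially the same approach as the paper: Proposition \ref{pr:main} has no analytic proof there either, but is exactly the tabulated outcome of the MLE/\texttt{fitmethis} procedure (ranking the fifteen at-most-two-parameter families by log-likelihood and AIC) applied to the computed rate functions $R_{q,r}^\gamma$, which are legitimate distribution functions by Lemma \ref{lm:R_prop}. The only difference is that you spell out the construction of the integer-weighted sample from the dimension increments, a step the paper leaves implicit.
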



Our results show that for $q\geq 3$, among the two-parameter distributions, the extreme value distribution function is a good estimation of the rate function of subfield subcodes of Hermitian codes. 

The extreme value distribution is also referred to as Gumbel or type 1 Fisher-Tippet distribution. In probability theory, these are the limiting distributions of the minimum of a large number of unbounded identically distributed random variables. The extreme value distribution function is
\[F(x;\alpha,\beta)=1-\exp\left(-\exp\left(\frac{x-\alpha}{\beta}\right)\right),\]
with location parameter $\alpha\in \mathbb{R}$ and a scale parameter $\beta>0$. The mean $\mu$ and the variance $\sigma^2$ are
\[\mu=\alpha+\beta \gamma, \qquad \sigma^2=\frac{\pi^2}{6}\beta^2,\]
where 
\[\gamma = \int_1^\infty \left(-\frac{1}{x}+\frac{1}{\lfloor x \rfloor}\right) dx \approx 0.57721566490153 \]
is the Euler-Mascheroni constant, see \cite{extreme2000kotz}*{Section 1.4}. With given empirical mean and variance of the data series, the parameters can be computed by
\[\alpha=\mu - \frac{\sqrt{6}\gamma}{\pi} \sigma, \qquad \beta = \frac{\sqrt{6}}{\pi}\sigma.\]

\begin{figure}
\caption{Estimating the extended rate function by extreme value distribution for subfield subcodes Hermitian codes\label{fig:pics_qisr}}
\centering
\includegraphics[width=0.45\columnwidth]{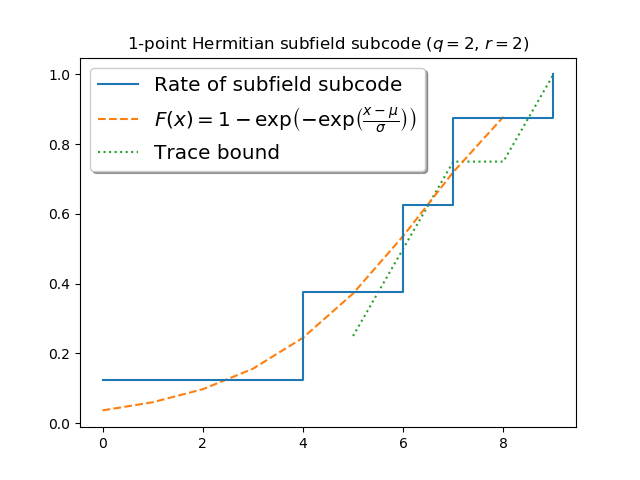}
\includegraphics[width=0.45\columnwidth]{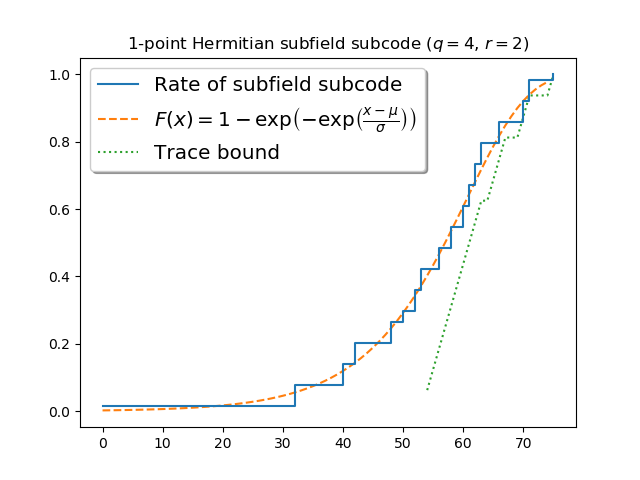}
\includegraphics[width=0.45\columnwidth]{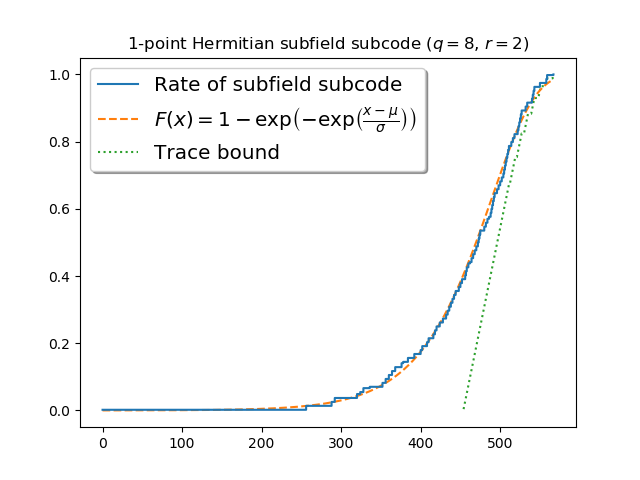}
\includegraphics[width=0.45\columnwidth]{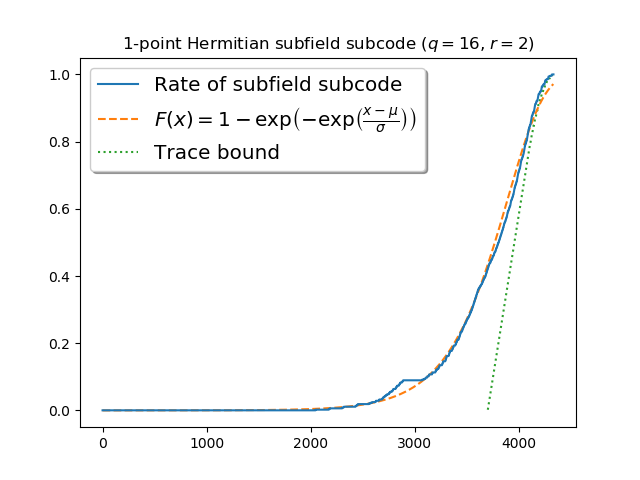}
%
\includegraphics[width=0.45\columnwidth]{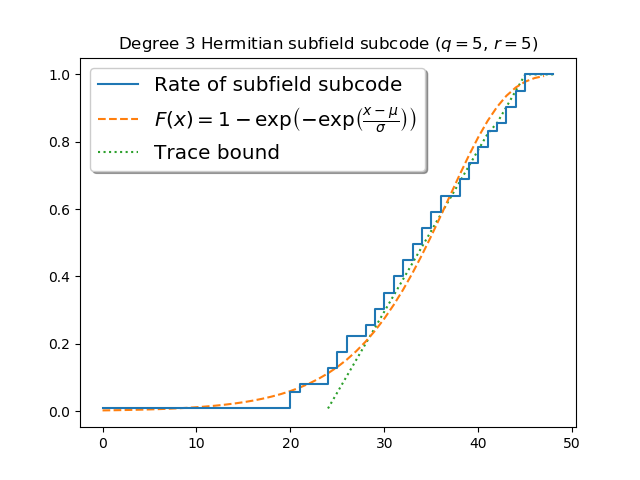}
\includegraphics[width=0.45\columnwidth]{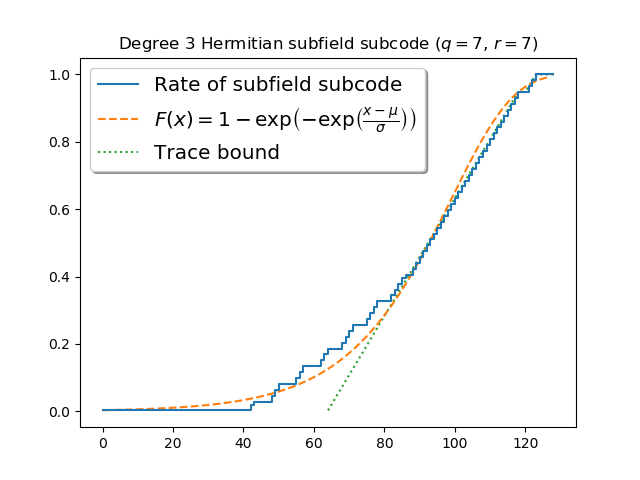}
\includegraphics[width=0.45\columnwidth]{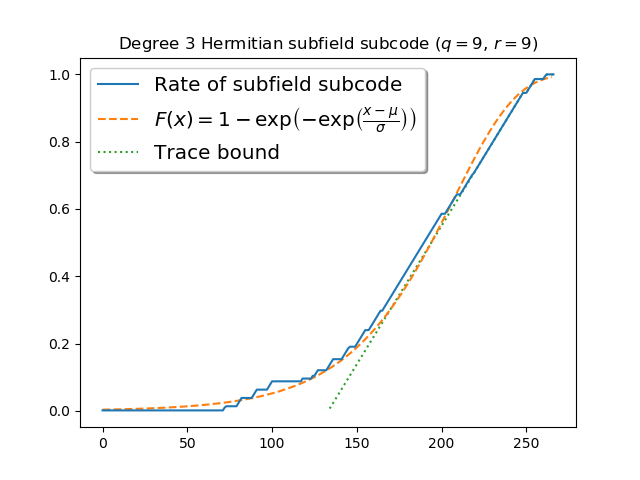}
\includegraphics[width=0.45\columnwidth]{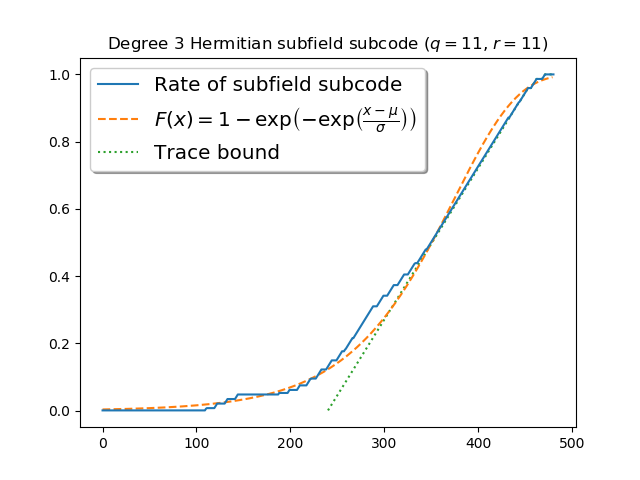}
\end{figure}

In Figure \ref{fig:pics_qisr} we visualized the fitting of the extreme value distribution function to our experimental results on the true dimension of subfield subcodes of Hermitian codes. 

The occurrence of the extreme value distribution in the context of subfield subcodes of AG codes may be somewhat surprising and we cannot give a plain mathematical explanation for this. However, the rank of random matrices over finite fields is known to be related to the class of Gumbel type distributions, see Cooper's result \cite{cooper2000randmat}*{Theorem 5} for the theoretical background. This theory has been applied to parameter estimates of random erasure codes by Studholme and Blake \cite{randmat2010erasure}.

\section{Application: Estimating the key size of McEliece Cryptosystem}
\label{sec:application}

The largest (but not the only) part of the public key of the McEliece cryptosystem is the matrix $A$ which defines the underlying error correction code. $A$ is either the $n\times k$ generator matrix, or the $n\times (n-k)$ parity check matrix. In either case, $A$ may be assumed to be in standard form, which means that the public key is given by $k(n-k)$ elements of $\mathbb{F}_r$. Hence, the key size is
\[\log_2(r)k(n-k).\] 
Hence, for a fixed field $\mathbb{F}_r$ and length $n$, the key size is propotional to $R(1-R)$, see \cite{Niebuhr2012}. The true values of $R^\gamma_{q,r}(s)(1-R^\gamma_{q,r}(s))$ can be estimated by $F(x)(1-F(x))$, where $F(x)$ is the extreme value distribution function, see Figure \ref{fig:keysize}.

\begin{figure}
\caption{Estimating the key size $n^2 R(1-R)$}
\label{fig:keysize}
\centering
\includegraphics[width=0.4\columnwidth]{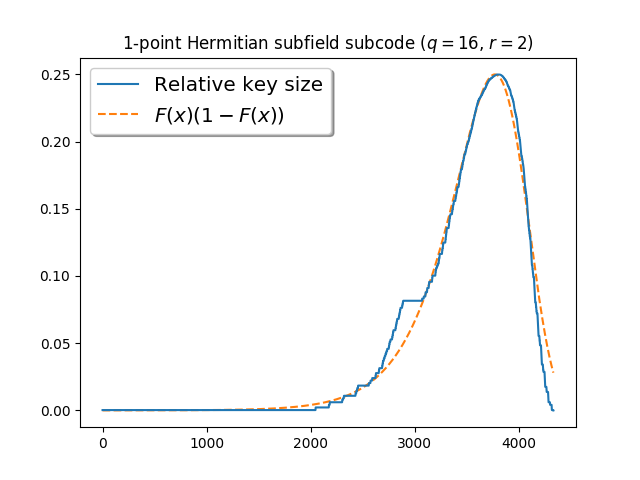}
\includegraphics[width=0.4\columnwidth]{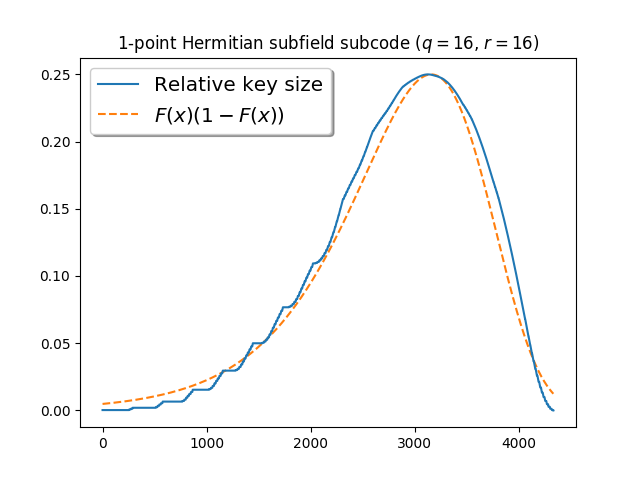}
\end{figure}

\section{Conclusion and future work}
\label{sec:conclusion}

The main goal of this study was to establish an approximating formula of the true dimension of the subfield subcodes of Hermitian codes. We conducted an experimental study to analyze the datasets of the true dimension of the $\mathbb{F}_r$-linear codes $C^{1-pt}_{q,r}(s)$, $C^{deg-3}_{q,r}(s)$ for $q\in \left\lbrace 2,3,4,5,7,8,9,11,13,16 \right\rbrace$, $r=2$ or $r=q$, and $s$ is an integer parameter running from $0$ to $q^3+(q+1)(q-2)$. This analysis helped us to derive new properties of their structure and led to an approach that might be useful for further research and applications.

Theoretically, the main contribution of this work is the set up of statistical formulas such as moment and expectation by mean of the extended rate function of the underlying classes of subfield subcodes of Hermitian codes.

From a statistical perspective, the main result is the comparison of the fitting of our datasets of true dimensions to well known distribution functions of MATLAB's Statistics and Machine Learning Toolbox, using the method of \texttt{fitmethis}. 

We found that the extreme value distribution is the best fitting one for $q>5$ and the second best fitting distribution for smalles values of $q$. Also the gamma and the normal distributions have good fitting properties. Our proposal is to use the extreme value distribution function to estimate the true dimension of subfield subcodes of Hermitian codes. In the last section of this paper, we applied this formula to give an approximation for the key size of the McEliece scheme, depending on the parameter $s$.

In the future, we aim to replace Goppa codes in McEliece's original version with a family of codes that permit to reduce the public key size and to increase the code rate by maintaining a given level of security. Therefore, we intend to analyze McEliece cryptosystems based on subclasses of subfield subcodes of Hermitian codes. Our future work will include experiments, simulations, and security and cryptanalysis of the McEliece scheme in term of its public key size and other parameters. The measurements are based on attacks with supposed lowest complexity, e.g. information set decoding or the Schur product distinguisher. 

\section*{Acknowledgment}


The presented work was carried out within the project ``Security Enhancing Technologies for the Internet of Things'' 2018-1.2.1-NKP-2018-00004, supported by the National Research, Development and Innovation Fund of Hungary, financed under the 2018-1.2.1-NKP funding scheme. Partially supported by NKFIH-OTKA Grants 119687 and 115288.

The authors would like to thank Levente Butty\'an (Budapest University of Technology, Hungary) for motivating discussions and M\'aty\'as Barczy (University of Szeged, Hungary) for his help to deal successfully with the concepts from probability theory and statistics. 
\medskip
\bibliographystyle{alpha}
\begin{bibdiv}
\begin{biblist}
\bib{nnn}{book}{
      author={Alagic, Gorjan},
      author={Alperin-Sheriff, Jacob},
      author={Apon, Daniel},
      author={Cooper, David},
      author={Dang, Quynh},
      author={Liu, Yi-Kai},
      author={Miller, Carl},
      author={Moody, Dustin},
      author={others},
       title={Status report on the first round of the {NIST} post-quantum
  cryptography standardization process},
   publisher={US Department of Commerce, National Institute of Standards and
  Technology},
        date={2019},
         url={https://doi.org/10.6028/NIST.IR.8240},
}

\bib{google2019quantum}{article}{
      author={Arute, Frank},
      author={Arya, Kunal},
      author={Babbush, Ryan},
      author={Bacon, Dave},
      author={Bardin, Joseph~C.},
      author={Barends, Rami},
      author={Biswas, Rupak},
      author={Boixo, Sergio},
      author={Brandao, Fernando G. S.~L.},
      author={Buell, David~A.},
      author={Burkett, Brian},
      author={Chen, Yu},
      author={Chen, Zijun},
      author={Chiaro, Ben},
      author={Collins, Roberto},
      author={Courtney, William},
      author={Dunsworth, Andrew},
      author={Farhi, Edward},
      author={Foxen, Brooks},
      author={Fowler, Austin},
      author={Gidney, Craig},
      author={Giustina, Marissa},
      author={Graff, Rob},
      author={Guerin, Keith},
      author={Habegger, Steve},
      author={Harrigan, Matthew~P.},
      author={Hartmann, Michael~J.},
      author={Ho, Alan},
      author={Hoffmann, Markus},
      author={Huang, Trent},
      author={Humble, Travis~S.},
      author={Isakov, Sergei~V.},
      author={Jeffrey, Evan},
      author={Jiang, Zhang},
      author={Kafri, Dvir},
      author={Kechedzhi, Kostyantyn},
      author={Kelly, Julian},
      author={Klimov, Paul~V.},
      author={Knysh, Sergey},
      author={Korotkov, Alexander},
      author={Kostritsa, Fedor},
      author={Landhuis, David},
      author={Lindmark, Mike},
      author={Lucero, Erik},
      author={Lyakh, Dmitry},
      author={Mandr{\`a}, Salvatore},
      author={McClean, Jarrod~R.},
      author={McEwen, Matthew},
      author={Megrant, Anthony},
      author={Mi, Xiao},
      author={Michielsen, Kristel},
      author={Mohseni, Masoud},
      author={Mutus, Josh},
      author={Naaman, Ofer},
      author={Neeley, Matthew},
      author={Neill, Charles},
      author={Niu, Murphy~Yuezhen},
      author={Ostby, Eric},
      author={Petukhov, Andre},
      author={Platt, John~C.},
      author={Quintana, Chris},
      author={Rieffel, Eleanor~G.},
      author={Roushan, Pedram},
      author={Rubin, Nicholas~C.},
      author={Sank, Daniel},
      author={Satzinger, Kevin~J.},
      author={Smelyanskiy, Vadim},
      author={Sung, Kevin~J.},
      author={Trevithick, Matthew~D.},
      author={Vainsencher, Amit},
      author={Villalonga, Benjamin},
      author={White, Theodore},
      author={Yao, Z.~Jamie},
      author={Yeh, Ping},
      author={Zalcman, Adam},
      author={Neven, Hartmut},
      author={Martinis, John~M.},
       title={Quantum supremacy using a programmable superconducting
  processor},
        date={2019},
        ISSN={1476-4687},
     journal={Nature},
      volume={574},
      number={7779},
       pages={505\ndash 510},
         url={https://doi.org/10.1038/s41586-019-1666-5},
}

\bib{baldi2013security}{article}{
      author={Baldi, Marco},
      author={Bianchi, Marco},
      author={Chiaraluce, Franco},
       title={Security and complexity of the mceliece cryptosystem based on
  quasi-cyclic low-density parity-check codes},
        date={2013},
     journal={IET Information Security},
      volume={7},
      number={3},
       pages={212\ndash 220},
}

\bib{cooper2000randmat}{inproceedings}{
      author={Cooper, C.},
       title={On the distribution of rank of a random matrix over a finite
  field},
        date={2000},
   booktitle={Proceedings of the {N}inth {I}nternational {C}onference
  ``{R}andom {S}tructures and {A}lgorithms'' ({P}oznan, 1999)},
      volume={17},
       pages={197\ndash 212},
  url={https://doi.org/10.1002/1098-2418(200010/12)17:3/4<197::AID-RSA2>3.3.CO;2-B},
      review={\MR{1801132}},
}

\bib{cossidente1999covered}{article}{
      author={Cossidente, Antonio},
      author={Korchm\'{a}ros, Gabor},
      author={Torres, Fernando},
       title={On curves covered by the {H}ermitian curve},
        date={1999},
        ISSN={0021-8693},
     journal={J. Algebra},
      volume={216},
      number={1},
       pages={56\ndash 76},
         url={https://doi.org/10.1006/jabr.1998.7768},
      review={\MR{1694594}},
}

\bib{Couvreur2017}{article}{
      author={Couvreur, Alain},
      author={M\'{a}rquez-Corbella, Irene},
      author={Pellikaan, Ruud},
       title={Cryptanalysis of {M}c{E}liece cryptosystem based on algebraic
  geometry codes and their subcodes},
        date={2017},
        ISSN={0018-9448},
     journal={IEEE Trans. Inform. Theory},
      volume={63},
      number={8},
       pages={5404\ndash 5418},
         url={https://doi.org/10.1109/TIT.2017.2712636},
      review={\MR{3683571}},
}

\bib{fitmethis2020matlab}{misc}{
      author={de~Castro, Francisco},
       title={{fitmethis}, {V}ersion 1.3.0.0},
        date={2020},
  url={https://www.mathworks.com/matlabcentral/fileexchange/40167-fitmethis},
        note={MATLAB Central File Exchange},
}

\bib{Del}{article}{
      author={Delsarte, Philippe},
       title={On subfield subcodes of modified {R}eed-{S}olomon codes},
        date={1975},
        ISSN={0018-9448},
     journal={IEEE Trans. Information Theory},
      volume={IT-21},
      number={5},
       pages={575\ndash 576},
      review={\MR{0411819}},
}

\bib{driencourt1985some}{inproceedings}{
      author={Driencourt, Yves},
       title={Some properties of elliptic codes over a field of characteristic
  2},
organization={Springer},
        date={1985},
   booktitle={International conference on applied algebra, algebraic
  algorithms, and error-correcting codes},
       pages={185\ndash 193},
}

\bib{duursma1993algebraic}{article}{
      author={Duursma, Iwan~M.},
       title={Algebraic decoding using special divisors},
        date={1993},
     journal={IEEE transactions on information theory},
      volume={39},
      number={2},
       pages={694\ndash 698},
}

\bib{duursma1993decoding}{article}{
      author={Duursma, Iwan~M.},
       title={Decoding--codes from curves and cyclic codes},
        date={1993},
}

\bib{duursma1993majority}{article}{
      author={Duursma, Iwan~M.},
       title={Majority coset decoding},
        date={1993},
     journal={IEEE transactions on information theory},
      volume={39},
      number={3},
       pages={1067\ndash 1070},
}

\bib{khalfaoui2019dimension}{article}{
      author={El~Khalfaoui, Sabira},
      author={Nagy, G\'abor~P.},
       title={On the dimension of the subfield subcodes of 1-point {H}ermitian
  codes},
        date={2019},
        ISSN={1930-5346},
     journal={Advances in Mathematics of Communications},
      volume={0},
      number={0},
       pages={0},
      eprint={arxiv:1906.10444},
  url={http://aimsciences.org//article/id/3c07c763-7026-4b14-a786-8038c3bad13b},
}

\bib{eliason1993maximum}{book}{
      author={Eliason, Scott~R},
       title={Maximum likelihood estimation: Logic and practice},
   publisher={Sage},
        date={1993},
      number={96},
}

\bib{feng1993decoding}{article}{
      author={Feng, G.-L.},
      author={Rao, Thammavarapu R.~N.},
       title={Decoding algebraic-geometric codes up to the designed minimum
  distance},
        date={1993},
     journal={IEEE Transactions on Information Theory},
      volume={39},
      number={1},
       pages={37\ndash 45},
}

\bib{GAP4.10.2}{misc}{
       title={{GAP} {\textendash} {G}roups, {A}lgorithms, and {P}rogramming,
  {V}ersion 4.10.2},
organization={The GAP {G}roup},
        date={2019},
         url={https://www.gap-system.org},
}

\bib{hirschfeld2008algebraic}{book}{
      author={Hirschfeld, J. W.~P.},
      author={Korchm\'{a}ros, G.},
      author={Torres, F.},
       title={Algebraic curves over a finite field},
      series={Princeton Series in Applied Mathematics},
   publisher={Princeton University Press, Princeton, NJ},
        date={2008},
        ISBN={978-0-691-09679-7},
      review={\MR{2386879}},
}

\bib{hoholdt1995decoding}{article}{
      author={Hoholdt, Tom},
      author={Pellikaan, Ruud},
       title={On the decoding of algebraic-geometric codes},
        date={1995},
     journal={IEEE Transactions on Information Theory},
      volume={41},
      number={6},
       pages={1589\ndash 1614},
}

\bib{hoholdt1998algebraic}{article}{
      author={H{\o}holdt, Tom},
      author={Van~Lint, Jacobus~H.},
      author={Pellikaan, Ruud},
       title={Algebraic geometry codes},
        date={1998},
     journal={Handbook of coding theory},
      volume={1},
      number={Part 1},
       pages={871\ndash 961},
}

\bib{justesen1989construction}{article}{
      author={Justesen, J{\o}rn},
      author={Larsen, Knud~J.},
      author={Jensen, Helge~Elbr{\o}nd},
      author={Havemose, Allan},
      author={Hoholdt, Tom},
       title={Construction and decoding of a class of algebraic geometry
  codes},
        date={1989},
     journal={IEEE Transactions on Information Theory},
      volume={35},
      number={4},
       pages={811\ndash 821},
}

\bib{justesen1992fast}{article}{
      author={Justesen, J{\o}rn},
      author={Larsen, Knud~J.},
      author={Jensen, Helge~Elbr{\o}nd},
      author={Hoholdt, Tom},
       title={Fast decoding of codes from algebraic plane curves},
        date={1992},
     journal={IEEE Transactions on Information Theory},
      volume={38},
      number={1},
       pages={111\ndash 119},
}

\bib{kirfel1995minimum}{article}{
      author={Kirfel, Christoph},
      author={Pellikaan, Ruud},
       title={The minimum distance of codes in an array coming from telescopic
  semigroups},
        date={1995},
     journal={IEEE Transactions on information theory},
      volume={41},
      number={6},
       pages={1720\ndash 1732},
}

\bib{konishi2008information}{book}{
      author={Konishi, Sadanori},
      author={Kitagawa, Genshiro},
       title={Information criteria and statistical modeling},
   publisher={Springer Science \& Business Media},
        date={2008},
}

\bib{korchmarosnagy2013}{article}{
      author={Korchm\'{a}ros, G\'{a}bor},
      author={Nagy, G\'{a}bor~P.},
       title={Hermitian codes from higher degree places},
        date={2013},
        ISSN={0022-4049},
     journal={J. Pure Appl. Algebra},
      volume={217},
      number={12},
       pages={2371\ndash 2381},
         url={https://doi.org/10.1016/j.jpaa.2013.04.002},
      review={\MR{3057317}},
}

\bib{extreme2000kotz}{book}{
      author={Kotz, Samuel},
      author={Nadarajah, Saralees},
       title={Extreme value distributions},
   publisher={Imperial College Press, London},
        date={2000},
        ISBN={1-86094-224-5},
         url={https://doi.org/10.1142/9781860944024},
        note={Theory and applications},
      review={\MR{1892574}},
}

\bib{krachkovskii1988decoding}{inproceedings}{
      author={Krachkovskii, V.~Yu.},
       title={Decoding of codes on algebraic curves},
        date={1988},
   booktitle={Conference odessa},
}

\bib{loidreau2000strengthening}{inproceedings}{
      author={Loidreau, Pierre},
       title={Strengthening mceliece cryptosystem},
organization={Springer},
        date={2000},
   booktitle={International conference on the theory and application of
  cryptology and information security},
       pages={585\ndash 598},
}

\bib{mceliece1978public}{article}{
      author={McEliece, Robert~J},
       title={A public-key cryptosystem based on algebraic},
        date={1978},
}

\bib{menezes2013applications}{book}{
      author={Menezes, Alfred~J.},
      author={Blake, Ian~F.},
      author={Gao, XuHong},
      author={Mullin, Ronald~C.},
      author={Vanstone, Scott~A.},
      author={Yaghoobian, Tomik},
       title={Applications of finite fields},
   publisher={Springer Science \& Business Media},
        date={2013},
      volume={199},
}

\bib{HERmitian01}{misc}{
      author={Nagy, G\'abor~P.},
      author={El~Khalfaoui, Sabira},
       title={{HERmitian}, {C}omputing with divisors, {R}iemann-{R}och spaces
  and {AG}-odes of {H}ermitian curves, {V}ersion 0.1},
        date={2019},
         url={https://github.com/nagygp/Hermitian},
        note={GAP package},
}

\bib{Niebuhr2012}{article}{
      author={Niebuhr, Robert},
      author={Meziani, Mohammed},
      author={Bulygin, Stanislav},
      author={Buchmann, Johannes},
       title={Selecting parameters for secure mceliece-based cryptosystems},
        date={2012Jun},
        ISSN={1615-5270},
     journal={International Journal of Information Security},
      volume={11},
      number={3},
       pages={137\ndash 147},
         url={https://doi.org/10.1007/s10207-011-0153-2},
}

\bib{pellikaan1993efficient}{incollection}{
      author={Pellikaan, Ruud},
       title={On the efficient decoding of algebraic-geometric codes},
        date={1993},
   booktitle={Eurocode’92},
   publisher={Springer},
       pages={231\ndash 253},
}

\bib{peterson1960encoding}{article}{
      author={Peterson, Wesley},
       title={Encoding and error-correction procedures for the bose-chaudhuri
  codes},
        date={1960},
     journal={IRE Transactions on Information Theory},
      volume={6},
      number={4},
       pages={459\ndash 470},
}

\bib{pinero2014subfield}{article}{
      author={Pi{\~n}ero, Fernando},
      author={Janwa, Heeralal},
       title={On the subfield subcodes of {H}ermitian codes},
        date={2014},
     journal={Designs, codes and cryptography},
      volume={70},
      number={1-2},
       pages={157\ndash 173},
}

\bib{sakata1995fast}{article}{
      author={Sakata, Shajiro},
      author={Justesen, J{\o}rn},
      author={Madelung, Y.},
      author={Jensen, H.~Elbrond},
      author={Hoholdt, Tom},
       title={Fast decoding of algebraic-geometric codes up to the designed
  minimum distance},
        date={1995},
     journal={IEEE Transactions on Information Theory},
      volume={41},
      number={6},
       pages={1672\ndash 1677},
}

\bib{sakata1990extension}{article}{
      author={Sakata, Shojiro},
       title={Extension of the berlekamp-massey algorithm to n dimensions},
        date={1990},
     journal={Information and Computation},
      volume={84},
      number={2},
       pages={207\ndash 239},
}

\bib{ShProb1}{book}{
      author={Shiryaev, Albert~N.},
       title={Probability. 1},
     edition={3},
      series={Graduate Texts in Mathematics},
   publisher={Springer, New York},
        date={2016},
      volume={95},
        ISBN={978-0-387-72205-4},
        note={Translated from the fourth (2007) Russian edition by R. P. Boas
  and D. M. Chibisov},
      review={\MR{3467826}},
}

\bib{shor1999}{article}{
      author={Shor, Peter~W.},
       title={Polynomial-time algorithms for prime factorization and discrete
  logarithms on a quantum computer},
        date={1999},
     journal={SIAM Review},
      volume={41},
      number={2},
       pages={303\ndash 332},
      eprint={https://doi.org/10.1137/S0036144598347011},
         url={https://doi.org/10.1137/S0036144598347011},
}

\bib{skorobogatov1990decoding}{article}{
      author={Skorobogatov, Alexei~N.},
      author={Vladut, Serge~G.},
       title={On the decoding of algebraic-geometric codes},
        date={1990},
     journal={IEEE Transactions on Information Theory},
      volume={36},
      number={5},
       pages={1051\ndash 1060},
}

\bib{stepanov2012codes}{book}{
      author={Stepanov, Serguei~A.},
       title={Codes on algebraic curves},
   publisher={Springer Science \& Business Media},
        date={2012},
}

\bib{stichtenoth2009algebraic}{book}{
      author={Stichtenoth, Henning},
       title={Algebraic function fields and codes},
   publisher={Springer Science \& Business Media},
        date={2009},
      volume={254},
}

\bib{randmat2010erasure}{article}{
      author={Studholme, Chris},
      author={Blake, Ian~F.},
       title={Random matrices and codes for the erasure channel},
        date={2010},
        ISSN={0178-4617},
     journal={Algorithmica},
      volume={56},
      number={4},
       pages={605\ndash 620},
         url={https://doi.org/10.1007/s00453-008-9192-0},
      review={\MR{2581065}},
}

\bib{matlabstat}{manual}{
      author={The~MathWorks, Inc.},
       title={Statistics and machine learning toolbox},
     address={Natick, Massachusetts, United State},
        date={2019},
         url={https://www.mathworks.com/help/stats/},
}

\bib{V05}{article}{
      author={V{\'e}ron, P.},
       title={Proof of conjectures on the true dimension of some binary {G}oppa
  codes},
        date={2005},
        ISSN={0925-1022},
     journal={Des. Codes Cryptogr.},
      volume={36},
      number={3},
       pages={317\ndash 325},
         url={https://doi.org/10.1007/s10623-004-1722-4},
      review={\MR{2162582}},
}

\end{biblist}
\end{bibdiv}
\end{document}